\newlength{\dhatheight}
\theoremstyle{definition}
\newtheorem{theorem}{Theorem}[section]
\newtheorem{conjecture}[theorem]{Conjecture}
\newtheorem{proposition}[theorem]{Proposition}
\newtheorem{lemma}[theorem]{Lemma}
\newtheorem{corollary}[theorem]{Corollary}
\newtheorem{definition}[theorem]{Definition}
\newtheorem{example}[theorem]{Example}
\newtheorem{question}[theorem]{Question}
\newcommand{\ep}{\epsilon}
\newcommand{\s}{\sigma}
\newlist{pcases}{enumerate}{1}
\setlist[pcases]{
  label={\em{Case~\arabic*:}}\protect\thiscase.~,
  ref=\arabic*,
  align=left,
  labelsep=0pt,
  leftmargin=0pt,
  labelwidth=0pt,
  parsep=0pt
}
\newcommand{\case}[1][]{%
  \if\relax\detokenize{#1}\relax
    \def\thiscase{}%
  \else
    \def\thiscase{~#1}%
  \fi
  \item
}
\begin{document}
\title[Families of not perfectly straight knots]
{Families of not perfectly straight knots}

\author[N.~Owad]{Nicholas Owad}
\address{Topology and Geometry of Manifolds Unit\\
Okinawa Institute of Science and Technology Graduate University\\
Okinawa, Japan 904-0495}
\email{nicholas.owad@oist.jp}
\thanks{2016 {\em Mathematics Subject Classification}. 57M25, 57M27}

\begin{abstract}
We present two families of knots which have straight number higher than crossing number.  In the case of the second family, we have computed the straight number explicitly.  We also give a general theorem about alternating knots that states adding an even number of crossings to a twist region will not change whether the knots are perfectly straight or not perfectly straight.
\end{abstract}

\maketitle



\section{Introduction}\label{sec:intro}


Knot diagrams are most commonly drawn with the minimum number of crossings.  This is how they appear in the knot table in Rolfsen \cite{Rolfsen} which is often referred to as the standard knot table.  Other common ways of presenting knots are with braids closures, in bridge position, thin position, \"ubercrossing and petal diagrams and numerous others.  From most of these presentations of diagrams, invariants are created which are interesting in their own respect.  Jablan and Radovi\'c defined the Meander number and OGC number, see \cite{JR}.  The author answered questions of theirs in \cite{me} and defined the invariant, the straight number of a knot.  In these two papers, Jablan and Radovi\'c and the author only succeed in calculating the straight number for the standard table of knots and a few simple families with straight number equal to the crossing number.  Here, we present the first known infinite families of knots with straight number strictly larger than crossing number.

Adams, Shinjo, and Tanka in \cite{AST} have the following result which we make use of here.

\begin{theorem}\label{thm:AST}{\cite[Theorem 1.2]{AST}}
Every knot has a projection that can be decomposed into two sub-arcs such that each sub-arc never crosses itself.
\end{theorem}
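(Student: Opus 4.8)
The plan is to recast the statement in terms of the Gauss data of a diagram and then to build a diagram with the required structure by an explicit isotopy.

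First I would reformulate what is being asked. If a projection of $K$ is cut at two points $p,q$ into sub-arcs $\alpha$ and $\beta$, then ``each sub-arc never crosses itself'' means precisely that every double point of the projection is a crossing of $\alpha$ with $\beta$; equivalently, reading the crossings in cyclic order around $K$ (the Gauss sequence), the two points $p$ and $q$ separate the two occurrences of every crossing. In particular each of $\alpha,\beta$ is an embedded planar arc, so the desired projection is exactly a \emph{meander-type} diagram: after an ambient isotopy of the plane we may straighten $\alpha$ to a horizontal segment, and then $\beta$ is a simple arc meeting that segment transversally in a sequence of arches, with the over/under information at these crossings recovering $K$. So it suffices to produce, for an arbitrary $K$, some diagram of this meander form.

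Second I would construct such a diagram. Start from any diagram $D_0$ of $K$, choose a basepoint, and traverse $K$ to designate a long sub-arc $\alpha$ and its complement $\beta$. Pull $\alpha$ taut by an isotopy so that it projects to an embedded arc lying over (or under, as dictated by $K$) the rest of the diagram; for a single sub-arc this can be attempted by finger moves that push its self-crossings toward its two ends. What remains is to remove the self-crossings of $\beta$. I would do this by induction on the number of self-crossings of $\beta$: at an innermost self-crossing of $\beta$ bounding a sub-loop that meets $\alpha$ in the fewest strands, reroute that loop across $\alpha$ by a Reidemeister-II--type move, trading the $\beta$-self-crossing for crossings between $\beta$ and $\alpha$ while preserving the knot type. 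Each such move should strictly lower the count of $\beta$-self-crossings and introduce none in $\alpha$, so the process terminates in a diagram where both arcs are embedded.

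The hard part will be the inductive clearing step. I must guarantee that an innermost self-crossing of $\beta$ can always be removed by a rerouting that (i) creates no new self-crossing of $\beta$, (ii) creates no self-crossing of $\alpha$, and (iii) genuinely preserves $K$ rather than changing a crossing. The first two points require a careful choice of which loop to push and in which direction, and the third requires recording the correct over/under data as the loop sweeps across $\alpha$; verifying that such a move is available no matter how $\beta$ is entangled with $\alpha$, together with exhibiting a strict monovariant so the induction terminates, is the crux of the argument. An alternative that sidesteps the entanglement bookkeeping is to begin instead from a closed-braid or arc (grid) presentation of $K$, whose built-in monotonicity already forces one family of strands to be mutually non-crossing, and then to amalgamate those strands into the two arcs; there the obstacle migrates to checking that the amalgamating arcs can be routed without introducing new self-crossings.
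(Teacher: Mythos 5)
This statement is quoted from Adams--Shinjo--Tanaka \cite[Theorem 1.2]{AST}; the paper you are reading does not prove it, so there is no internal proof to compare against. Your reformulation in the first paragraph is correct and is exactly how the result is used here: the two occurrences of each crossing must be separated by the two cut points in the Gauss sequence, and straightening one embedded sub-arc yields the ``straight position'' diagrams of Section \ref{sec:notation}.

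However, as a proof your proposal has a genuine gap, and it sits exactly where you say the crux is. Two steps are asserted but not established. First, you assume a long sub-arc $\alpha$ of an arbitrary diagram can be made embedded ``by finger moves that push its self-crossings toward its two ends''; a generic sub-arc of a generic diagram crosses itself, and the moves that disentangle it will in general create new crossings involving $\beta$, so this already needs the same kind of argument as the main induction. Second, and more seriously, the inductive clearing step is not a construction: an ``innermost'' self-crossing of $\beta$ bounds a loop whose interior may contain strands of both $\alpha$ and $\beta$, and rerouting that loop across $\alpha$ is not a Reidemeister-II move --- sweeping the loop across the plane will generically drag it through other strands of $\beta$, creating new $\beta$-self-crossings, so the count you propose as a monovariant need not decrease. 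Without (a) a precise specification of which loop is pushed and where, (b) a proof that the move preserves the knot type with the correct over/under data, and (c) an explicit quantity that strictly decreases, the induction does not terminate and the proof is incomplete. A workable fix is to make the induction on a different measure: extend a maximal embedded sub-arc $\alpha$ one crossing at a time, showing that whenever $\beta$ still crosses itself one can reroute a single terminal segment of $\alpha$ over or under the entire rest of the diagram (a ``detour'' that is unknotted because it runs entirely above or below the projection plane) so that the number of crossings covered by the embedded arc strictly increases; this is essentially the Adams--Shinjo--Tanaka argument, and it avoids the entanglement bookkeeping that blocks your version. Your alternative suggestion via braid or grid presentations faces the same unaddressed issue when the monotone strands are amalgamated.
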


From this result, via a planar isotopy, one can produce a diagram with a single straight strand that contains all of the crossings, and we say the diagram is in {\em straight position}. By convention, we will draw this straight arc horizontally.  The number of crossings might need to increase to draw a diagram in straight position, and so we say the minimum number of crossings over all straight diagrams for a knot $K$ is the straight number,  $\mathtt{str}(K)$.

In \cite{me}, we calculated the straight number of all the knots in the standard Rolfsen table \cite{Rolfsen}.  We say that a knot $K$ is perfectly straight if $\mathtt{str}(K) = c(K)$.  Also in \cite{me}, we proved a few basic families of knots are perfectly straight, including torus knots $T_{2,q}$, pretzel knots, and 2-bridge knots with a continued fraction decomposition length less than 6.   But these results relied on finding an arc which meets every crossing before meeting itself a second time for some nice diagrams.  It is a harder problem to prove that a family of knots is not perfectly straight, as noted in \cite{me} by the following question. 

\begin{question}\cite{me}
 Can we find families of knots which are not perfectly straight?
\end{question}

In this paper, we produce two new families of knots which are not perfectly straight.  The second family comes from a more general theorem about twist regions in knots and straight number, which can be used on any alternating knot to create a new family, for either perfectly straight or not perfectly straight.

A spiral knot  $S(n,m, \ep)$, first defined in \cite{spiral}, see Definition \ref{def:spiral},  is a generalization of torus knots by changing the crossings in the standard braid diagram.  Champanerkar, Kofman, and Purcell, \cite{weave}, defined weaving knots, $W(n,m)$, which are alternating spiral knots.

\noindent{\bf{Theorem}~\ref{thm:spiral}.} 
{
Let $n\geq 3, m\geq n+1$ and $\gcd(n,m) = 1$.   Every weaving knot $W(n,m)$, is not perfectly straight, i.e. $\mathtt{str}(W(n,m))>c(W(n,m)).$
}

The next theorem is much more general and lets us create new infinite families under an operation we call increasing the number of full twists, see Definition \ref{def:incfull}.  Loosely, increasing the number of full twists means adding in an even number of crossings to a twist region.

\noindent{\bf{Theorem}~\ref{thm:inctwist}.}
{
Let $K$ be an alternating knot. Given any minimal diagram $D$ of $K$, let $K'$ be the knot obtained by increasing the number of full twists in any twist region of $D$.  Then $K$ is perfectly straight if and only if $K'$ is perfectly straight.
}

By applying this theorem, we can generalize Theorem \ref{thm:spiral}.  

\noindent{\bf{Corollary}~\ref{cor:spiral}.}
{
Let $n\geq 3, m\geq n+1$ and $\gcd(n,m)=1$.  Let  $w_i = ( \s_1^{\ep_{i_1}}\s_2^{\ep_{i_2}}\cdots \s_{n-1}^{\ep_{i_{n-1}}})$ where each $\ep_{i_j}$ is an odd integers.  Then let $w = w_1w_2\cdots w_m$ and let $K$ be the closure of $w$.  If $K$ is alternating, then $K$ is not perfectly straight.  
}

We also use Theorem \ref{thm:inctwist} on the knot $9_{32}$, which has the property $\mathtt{str}(9_{32}) = c(9_{32})+1 = 10$, to create the first known family of not perfectly straight knots with known straight number.

\noindent{\bf{Theorem}~\ref{thm:template}.}
{
Let $t=(t_1t_2,\ldots,t_6)$ be positive integers such that $t_1,t_2,t_5,$ and $t_6$  are odd and $t_3$ and $t_4$ are even and let $s$ be the sum of the $t_i$'s. Let $K_t$ be the alternating knot obtained from the template in Figure \ref{fig:template} with $t_i$ crossings in the corresponding twist region.  Then $\mathtt{str}(K_t) = c(K_t)+1 =  s+2$.
}

In the next section, we give some basic definitions related to straight number and braids.  In Section \ref{sec:braids}, we define and prove the family of weaving knots is not perfectly straight.  And in Section \ref{sec:twists} we  investigate how increasing the number of full twists affects the straight number.  Here we give the second family of not perfectly straight knots, and find their straight number.  We also discuss generalizing this idea to make any number of infinite families of not perfectly straight knots.


\section{Definitions and Background } \label{sec:notation}

We assume the reader is familiar with braids and knot theory.  See Birman and Brendles \cite{BB} and Rolfsen \cite{Rolfsen}, respectively, for more information.  For more information on straight knots and their properties, see \cite{me}.  We give the definitions that are relevant for this paper here.

A link is an ambient isotopy class of $n$ embedded circles in 3-space, i.e. $\bigsqcup^n S^1\hookrightarrow S^3$.  A knot is a link with a single component, $n=1$.  By Theorem \ref{thm:AST}, we know that every knot can be drawn with two arcs where all crossings occur between the these two arcs.  By planar isotopy, we can make one of these arcs straight, and we say the diagram is in {\em straight position}.

\begin{definition}\label{def:strnum}
Given a knot $K$, the {\em straight number} of $K$, $\mathtt{str}(K)$, is the minimum number of crossings over all diagrams of $K$ that are in straight position.
\end{definition}

If a knot $K$ has $\mathtt{str}(K) = c(K)$, where $c(K)$ is the crossing number, then we say $K$ is {\em perfectly straight}.  The  horizontal arc through the middle of the diagram in straight position is called the {\em straight strand} and every crossing occurs on this strand.

The braid group $B_n$ on $n$ strands has $n-1$ generators $\s_1, \s_2, \dots, \s_{n-1}$, where $\s_i$ represents $n$ vertical strands with the $i$-th strand passing over the $i+1$-st strand, traveling from top to bottom. An element $w$, of the braid group $B_n$, is turned into a link by taking the closure, $cl(w)$.  That link is a knot, or single component link, if the permutation obtained by mapping $w$ into the symmetric group is an $n$-cycle.

We also need to analyze behavior of knots under flypes, see \cite{flype} for more details.  For convenience, we include some definitions which will be used in future proofs.

\begin{definition}\label{def:flype}
A {\em flype} is move in a diagram described by Figure \ref{fig:flype}.  The single crossing that switches position to the other side of $F$ in a flype will be called the {\em flyper}.  \end{definition}

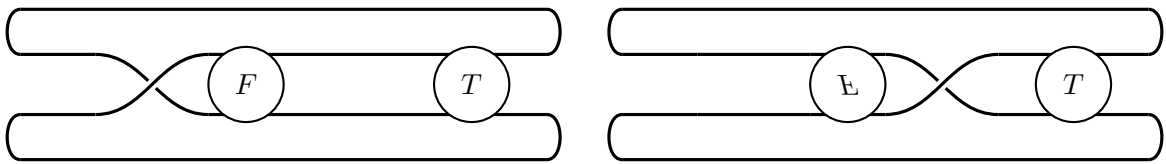
\begin{figure}[h]
\begin{tikzpicture}[scale=1]

\foreach \a in {.4}{
	\draw [very thick] (7,1+\a)   to (6,1+\a);
	\draw [very thick] (7,1-\a)   to (6,1-\a);
	
	\draw [very thick] (3,1+\a)   to (6,1+\a);
	\draw [very thick] (3,1-\a)   to (6,1-\a);
	
	\draw [very thick] (0,1+\a)   to (1,1+\a);
	\draw [very thick] (0,1-\a)   to (1,1-\a);
	
	\draw [very thick] (1,1+\a) [out=0,in=180]   to (2.5,1-\a);
	\draw [line width=0.14cm, white] (1,1-\a)  [out=0,in=180]  to (2.5,1+\a);
	\draw [very thick] (1,1-\a)  [out=0,in=180]  to (2.5,1+\a);
	
	\draw [very thick] (2.5,1+\a)   to (3,1+\a);
	\draw [very thick] (2.5,1-\a)   to (3,1-\a);

	\draw [very thick] (0,1+\a) [out=180,in=180]  to (0,2);
	\draw [very thick] (0,1-\a) [out=180,in=180]  to (0,0);
	
	\draw [very thick] (7,1+\a) [out=0,in=0]  to (7,2);
	\draw [very thick] (7,1-\a) [out=0,in=0]  to (7,0);

}

\draw [very thick] (0,2)   to (7,2);
\draw [very thick] (0,0)   to (7,0);

\draw [thick, fill = white] (3,1) circle [radius=.5];
\draw [thick, fill = white] (6,1) circle [radius=.5];

\node at (3,1) {$F$};
\node at (6,1) {$T$};

\begin{scope}[xshift = 8cm]

\foreach \a in {.4}{
	\draw [very thick] (7,1+\a)   to (5,1+\a);
	\draw [very thick] (7,1-\a)   to (5,1-\a);

	\draw [very thick] (3.5,1+\a) [out=0,in=180]   to (5,1-\a);
	\draw [line width=0.14cm, white] (3.5,1-\a)  [out=0,in=180]  to (5,1+\a);
	\draw [very thick] (3.5,1-\a)  [out=0,in=180]  to (5,1+\a);

	\draw [very thick] (0,1+\a)   to (1,1+\a);
	\draw [very thick] (0,1-\a)   to (1,1-\a);
	
	\draw [very thick] (1,1+\a) [out=0,in=180]   to (2.5,1+\a);
	\draw [very thick] (1,1-\a)  [out=0,in=180]  to (2.5,1-\a);
	
	\draw [very thick] (2.5,1+\a)   to (3.5,1+\a);
	\draw [very thick] (2.5,1-\a)   to (3.5,1-\a);

	\draw [very thick] (0,1+\a) [out=180,in=180]  to (0,2);
	\draw [very thick] (0,1-\a) [out=180,in=180]  to (0,0);
	
	\draw [very thick] (7,1+\a) [out=0,in=0]  to (7,2);
	\draw [very thick] (7,1-\a) [out=0,in=0]  to (7,0);

}

\draw [very thick] (0,2)   to (7,2);
\draw [very thick] (0,0)   to (7,0);

\draw [thick, fill = white] (3,1) circle [radius=.5];
\draw [thick, fill = white] (6,1) circle [radius=.5];

\node at (3,1) {{\scalebox{1}[-1]{$F$}}};
\node at (6,1) {$T$};

\end{scope}

\end{tikzpicture}

\caption{Two diagrams, $D_0$ on the left and $D_1$ on the right, related by a flype.}\label{fig:flype}
\end{figure}

\begin{definition}\label{def:cycle}\cite{Hoste}
Each crossing $x$ that is a flyper for some flype generates a unique {\em flype cycle} as shown in Figure \ref{fig:cycle}. This cycle is minimal in the sense that each tangle $F_i$ cannot be broken into to two nontrivial tangles that can each be part of a flype.  \end{definition}

\begin{figure}[h]
\begin{tikzpicture}[scale=1]

\foreach \a in {.4}{

	\draw [very thick] (0,1+\a) [out=0,in=180]   to (1.5,1-\a);
	\draw [line width=0.14cm, white] (0,1-\a)  [out=0,in=180]  to (1.5,1+\a);
	\draw [very thick] (0,1-\a)  [out=0,in=180]  to (1.5,1+\a);
	
	\draw [very thick] (1.5,1+\a)   to (7,1+\a);
	\draw [very thick] (1.5,1-\a)   to (7,1-\a);

	\draw [very thick] (9,1+\a)   to (11,1+\a);
	\draw [very thick] (9,1-\a)   to (11,1-\a);

	\draw [very thick] (0,1+\a) [out=180,in=180]  to (0,2);
	\draw [very thick] (0,1-\a) [out=180,in=180]  to (0,0);
	
	\draw [very thick] (11,1+\a) [out=0,in=0]  to (11,2);
	\draw [very thick] (11,1-\a) [out=0,in=0]  to (11,0);

}

\draw [very thick] (0,2)   to (11,2);
\draw [very thick] (0,0)   to (11,0);

\draw [thick, fill = white] (2,1) circle [radius=.5];
\node at (2,1) {$F_1$};

\draw [thick, fill = white] (4,1) circle [radius=.5];
\node at (4,1) {$F_2$};

\draw [thick, fill = white] (6,1) circle [radius=.5];
\node at (6,1) {$F_3$};

\draw [thick, fill = white] (10,1) circle [radius=.5];
\node at (10,1) {$F_n$};

\draw [fill] (8,1) circle [radius=.05];
\draw [fill] (8.5,1) circle [radius=.05];
\draw [fill] (7.5,1) circle [radius=.05];

\end{tikzpicture}

\caption{The flype cycle of length $n$ of a crossing that is a flyper.}\label{fig:cycle}
\end{figure}
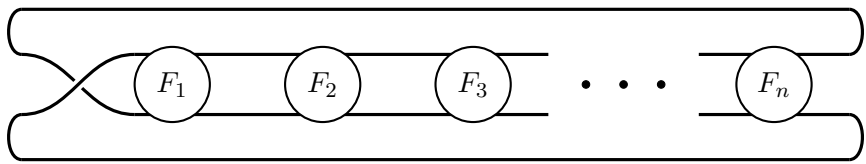



\section{Weaving knots}\label{sec:braids}

In \cite{spiral}, Brothers et al. define Spiral knots.  In this section, we prove alternating spiral knots, or weaving knots as in \cite{weave}, are not perfectly straight.  The idea for spiral knots comes from taking the standard diagram of a torus knot and changing the crossing information. 

\begin{definition}\label{def:spiral}\cite{spiral}
Let $n\geq 2$ and $m\geq1$, and let $K$ be the closure of the braid word $( \s_1^{\ep_{1}}\s_2^{\ep_{2}}\cdots \s_{n-1}^{\ep_{n-1}})^m$, where each $\ep_i = \pm 1$.  If $\ep = (\ep_1, \ep_2, \ldots, \ep_n)$, call $K$ the {\em spiral link} $S(n,m,\ep)$.
\end{definition}

Let $w = ( \s_1^{\ep_{1}}\s_2^{\ep_{2}}\cdots \s_{n-1}^{\ep_{n-1}})^m$ be the braid word of an spiral link.  Note that if the $\ep = (\pm1,\mp1,\pm1,\mp1, \ldots, \pm 1)$, then the spiral link is alternating and is called a {\em weaving link}, \cite{weave}, and call it $W(n,m)$.  See Figure \ref{fig:altbraid}.  For convenience, we name $w_0 = ( \s_1^{\ep_{1}}\s_2^{\ep_{2}}\cdots \s_{n-1}^{\ep_{n-1}})$ and thus, $w=w_0^m$.  From \cite{spiral}, when $\gcd(n,m)=1$, the spiral link is a knot.

\begin{figure}[h]
\begin{tikzpicture}[yscale=.35]

\foreach \j in {-2}{
\draw [very thick] (2,\j) [out=270,in=90]  to (1.5,\j -1);

\draw [line width=0.13cm,white] (1.5,\j) [out=270,in=90] to (2,\j-1);

\draw [very thick] (1.5,\j) [out=270,in=90] to (2,\j - 1);

\foreach \a in {2.5,3,3.5}
	\draw [very thick] (\a,\j) to (\a,\j-1);
}

\foreach \j in {-3}{
\draw [very thick] (2,\j) [out=270,in=90] to (2.5,\j - 1);

\draw [line width=0.13cm,white] (2.5,\j) [out=270,in=90] to (2,\j-1);
\draw [very thick] (2.5,\j) [out=270,in=90]  to (2,\j -1);

\foreach \a in {1.5,3,3.5}
	\draw [very thick] (\a,\j) to (\a,\j-1);
}


\foreach \j in {-4}{
\draw [very thick] (3,\j) [out=270,in=90]  to (2.5,\j -1);

\draw [line width=0.13cm,white] (2.5,\j) [out=270,in=90] to (3,\j-1);

\draw [very thick] (2.5,\j) [out=270,in=90] to (3,\j - 1);

\foreach \a in {1.5,2,3.5}
	\draw [very thick] (\a,\j) to (\a,\j-1);
}

\foreach \j in {-5}{
\draw [very thick] (3,\j) [out=270,in=90] to (3.5,\j - 1);

\draw [line width=0.13cm,white] (3.5,\j) [out=270,in=90] to (3,\j-1);
\draw [very thick] (3.5,\j) [out=270,in=90]  to (3,\j -1);

\foreach \a in {1.5,2,2.5}
	\draw [very thick] (\a,\j) to (\a,\j-1);
}

\end{tikzpicture}

\caption{An example of an alternating braid word $ w_0 =  \s_1\s_2^{-1}\s_3\s_4^{-1}\in B_5$.}\label{fig:altbraid}
\end{figure}
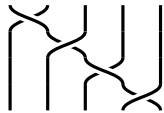

\begin{lemma}\label{lem:flype}
Let $n\geq3$ and $m\geq n+1$.  An weaving link $W(n,m)$ has exactly one reduced alternating diagram up to planar isotopy.
\end{lemma}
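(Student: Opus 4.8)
The plan is to invoke the Tait Flyping Theorem of Menasco--Thistlethwaite (see \cite{flype}), which is the natural tool since the statement is exactly an assertion of flype-rigidity. That theorem guarantees that any two reduced alternating diagrams of the same link are related by a finite sequence of flypes and planar isotopies. Consequently, to show that $W(n,m)$ has exactly one reduced alternating diagram up to planar isotopy, it suffices to fix one reduced alternating diagram and prove that every flype applicable to it yields a diagram planar isotopic to the original; equivalently, the standard closed-braid diagram of $W(n,m)$ admits no \emph{nontrivial} flype.

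First I would fix the standard diagram $D$ given by the closure of $w = w_0^m$, where $w_0 = \s_1^{\ep_1}\cdots\s_{n-1}^{\ep_{n-1}}$, and record two structural features. Because every generator occurs with exponent $\pm1$ and the alternating sign pattern never places the same generator on two cyclically adjacent letters of $w$ (within $w_0$ consecutive letters are $\s_i,\s_{i+1}$, and across the block boundary they are $\s_{n-1}^{\pm1},\s_1$ with $n\geq 3$), the word contains no subword $\s_i^{\pm2}$. Moreover any two $\s_1$-crossings are separated by a letter involving strand $2$, so no bigon can form. Hence $D$ is reduced and contains no nontrivial twist region: every crossing is its own twist region. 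This forces any flyper in $D$ to have a flype cycle (Definition \ref{def:cycle}) whose tangles $F_i$ are genuinely two-dimensional rather than single crossings.

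Next I would analyze the admissible flype cycles directly on the closed braid. A nontrivial flype requires a flyper crossing $x$ together with a Conway sphere, a simple closed curve meeting $D$ transversely in four points, that cuts off a nontrivial $2$-tangle $F$ which can be rotated. I would track how such a four-point curve meets the horizontal levels of the braid and show that, after isotopy, it must separate the strands in a manner incompatible with the rigid ascending/descending pattern of each block $w_0$. The hypothesis $m\geq n+1$ enters precisely here: it makes the braid long enough relative to its width that no such curve can enclose a proper nontrivial sub-tangle without slicing through the repeated blocks $w_0$, forcing the enclosed tangle either to be trivial or to be all of the complementary diagram, in which case the resulting ``flype'' is realized by a planar isotopy. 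Combined with primeness of $W(n,m)$ and the period-$m$ structure of $w$, this shows no nontrivial flype exists.

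The step I expect to be the main obstacle is this tangle-decomposition analysis: one must rule out combinatorially \emph{every} way a four-punctured sphere could split the closed weaving braid into flypeable pieces, rather than just the most obvious ones arising from the evident periodic symmetry. I would organize this by classifying the intersection pattern of a putative flyping curve with the braid strands and using the length bound $m\geq n+1$ to obstruct all nontrivial cases. Once this is established, the flyping theorem and the absence of bigons together give that $D$ is the unique reduced alternating diagram of $W(n,m)$ up to planar isotopy.
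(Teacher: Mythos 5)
Your proposal follows essentially the same route as the paper: invoke the Menasco--Thistlethwaite flyping theorem, establish primeness via Menasco's ``looks prime'' criterion, and then rule out nontrivial flypes by classifying the simple closed curves meeting the diagram in four points. Like the paper, you leave that final combinatorial classification of four-point curves as a sketch rather than carrying it out (the paper explicitly omits the technical details, noting only that all candidate circles either surround a single crossing or meet the knot more than four times), so the two arguments match in both structure and level of completeness.
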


We omit the technical proof here and include a quick sketch.  Menasco and Thistlethwaite proved the Tait flyping conjecture \cite{flype} where, any two prime, oriented, reduced, alternating diagrams are connected by a sequences of flypes and planar isotopy.   By Menasco's result ``an alternating knot is prime if and only if it looks prime," \cite{prime}, we know that these $W(n,m)$ are all prime for $n\geq3$ and $m\geq n+1$.    Finally, by analyzing the circles in the diagram which intersect the knot 4 times, we can see that there only such possibilities surround a single crossing, and these cannot be flyped to obtain a new diagram. See Figure \ref{fig:circles} for the different types of candidates circles.  Hence, there is no flype that yields a different diagram for a knot $W(n,m)$.

\begin{figure}[h]
\begin{tikzpicture}[yscale=.5, xscale=.65]

\node at (-1,-1.5) {$w_1$};
\node at (-1,-4.5) {$w_2$};
\node at (-1,-7.5) {$w_3$};

\foreach \b in {-1,-2}{
	\draw[gray,  dashed] (.5,3*\b)   to (7.5,3*\b);
}

\foreach \b in {0,-1,-2}{

\draw [very thick,gray] (6.5,3*\b) [out=270,in=90]  to (6,3*\b-1.8) [out=270,in=90]  to (6.5,3*\b-3);
\draw [very thick] (1,3*\b) [out=270,in=90]  to (7,3*\b-3);
\foreach \a in {2,3,4,5, 6,7}
	\draw [very thick] (\a,3*\b) [out=270,in=90] to (\a-1,3*\b-3);
}

\draw [very thick, gray ] (3.2,-1.5) [out=270,in=220]  to (4.7,-1.8) [out=40,in=0]  to (4,-.9) [out=180,in=90]  to (3.2,-1.5);

\draw [very thick, gray ]  (.8,-6)  [out=-90,in=220] to (2.0,-7.5)  [out=50,in=270] to (1.5,-6)   [out=90,in=-40] to (1.8,-3.8)   [out=140,in=40] to (1.5,-3.8)   [out=220,in=90] to (.8,-6);

\draw[ very thick, gray] (4.6,-4.2)  [out=0,in=90] to (4.1,-7)  [out=-90,in=0] to  (3.3,-8) [out=180,in=-90] to (3.5,-6) [out=90,in=180] to (4.6,-4.2) ;

\end{tikzpicture}

\caption{All of the circles, in gray, intersect the knot more than 4 times.}\label{fig:circles}
\end{figure}
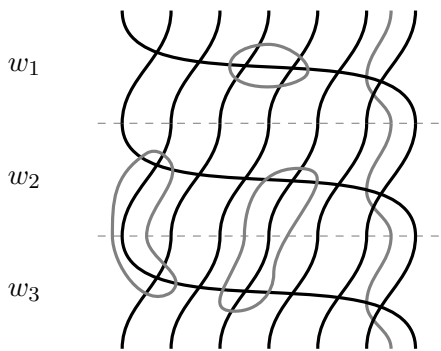

Now we prove the main theorem of this section.

\begin{theorem}\label{thm:spiral}~
Let $n\geq 3, m\geq n+1$ and $\gcd(n,m) = 1$.   Every weaving knot $W(n,m)$, is not perfectly straight, i.e. $\mathtt{str}(W(n,m))>c(W(n,m)).$
\end{theorem}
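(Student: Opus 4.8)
The plan is to reduce the problem to a purely combinatorial statement about the standard closed-braid diagram and then to show that statement fails.

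First I would pin down the minimal diagram. Since $W(n,m)$ is the closure of $w_0^m$ and $w_0$ has $n-1$ crossings, the standard diagram $D$ is reduced alternating with $m(n-1)$ crossings, so by Tait's theorem $c(W(n,m)) = m(n-1)$. As recorded in the sketch of Lemma~\ref{lem:flype}, $W(n,m)$ is prime, so by the resolution of the Tait flyping conjecture \cite{flype} every minimal-crossing diagram is reduced alternating, and by Lemma~\ref{lem:flype} it is planar-isotopic to $D$. Consequently, if $\mathtt{str}(W(n,m)) = c(W(n,m))$, then $D$ itself, after a planar isotopy, must be in straight position; so it suffices to show that $D$ admits no straight strand.

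Next I would reformulate ``straight'' combinatorially. A diagram with $c$ crossings is in straight position exactly when the knot splits into two sub-arcs (the straight strand and its complement), each embedded in the plane, i.e. each meeting every crossing at most once; since the two arcs together meet each crossing twice, each must meet every crossing exactly once. In the language of \cite{me}, this is an arc that meets every crossing before meeting itself a second time. Equivalently, writing the cyclic Gauss sequence of $D$ (length $2c$, each crossing occurring twice), $D$ is in straight position iff some cyclic sub-word of length $c$ contains each of the $c$ crossings exactly once. So the theorem is equivalent to the claim that the Gauss sequence of $D$ has no such length-$c$ window.

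To get at that sequence I would trace the knot through the closed braid. The permutation of $w_0$ is the $n$-cycle $\tau\colon p \mapsto p-1$ (with $1 \mapsto n$), so along the knot the strand position applies $\tau$ while the row index advances by $1$ at each copy of $w_0$; the condition $\gcd(n,m)=1$ guarantees a single component. The decisive structural feature is that a strand entering a row at position $1$ sweeps diagonally through all $n-1$ of that row's crossings consecutively (a ``diagonal pass''), whereas entering at any position $p \ge 2$ it meets only the single crossing at level $p-1$ and shifts to position $p-1$ (a ``single pass''). This organizes the Gauss sequence into $m$ super-blocks, each a diagonal block of $n-1$ crossings followed by a run of $n-1$ single crossings, with exactly one diagonal per row; the hypothesis $m \ge n+1$ ensures the two copies of any crossing lie in distinct super-blocks and that the levels $1,\dots,n-1$ are distinct and nonzero modulo $m$.

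The main obstacle is the final step: showing no length-$c$ window contains each crossing once. I would attack it by locating, for each crossing $(r,j)$, its diagonal copy (in the super-block of row $r$) and its single copy (in the super-block offset by $jn^{-1}-1 \bmod m$), and showing that these two copies can never be separated simultaneously for all crossings by a single balanced cut. Concretely, a length-$c$ window would force, for each of the at least $m-2$ rows whose diagonal pass is not cut, that all $n-1$ single copies of that row's crossings lie in the complementary arc; but those single copies sit in $n-1$ distinct super-blocks spread around the circle, and I expect to show they cannot all fall in one contiguous half. Equivalently, viewing each crossing as a chord joining its two copies, the $n-1$ families of equally spaced parallel chords cannot all be split by a common antipodal cut. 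Carrying out this covering/interleaving argument cleanly, and checking that it uses exactly $n\ge 3$, $m \ge n+1$, and $\gcd(n,m)=1$, is where the real work lies.
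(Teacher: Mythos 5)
Your setup matches the paper's: reduce to the unique reduced alternating diagram via Lemma~\ref{lem:flype} and the Tait flyping theorem, reformulate straight position as the existence of a sub-arc meeting every crossing exactly once, and identify the key dichotomy that a strand entering a copy of $w_0$ at position $1$ sweeps all $n-1$ crossings while any other entry position yields a single crossing. Up to that point you are on the paper's track. But the decisive step --- showing that no such sub-arc exists --- is exactly the part you defer (``where the real work lies''), so the proposal as written has a genuine gap: nothing in it actually establishes that the Gauss sequence of $D$ has no balanced window, and the chord-separation/antipodal-cut argument you gesture at is neither carried out nor obviously easier than a direct count.

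The paper closes this gap with a much more elementary greedy count, which you could adopt almost verbatim given your setup. Write $m=bn+r$ with $b\ge 1$ and $1\le r\le n-1$. An arc re-enters a block at strand one only once every $n$ blocks, so travelling once down the braid starting at strand one it collects $2b(n-1)+(n-1)+(r-1)$ distinct crossings, and extending backwards through the closure adds only $r-1$ more before a repeat is forced; by the symmetry of the diagram no starting point does better. The resulting bound $2b(n-1)+(n-1)+2(r-1)=2m+n-2(b+1)$ is strictly less than $m(n-1)$ for $n\ge 4$ (since $2m+n<3m\le m(n-1)$) and for $n=3$ (since $b\ge1$ gives $2m+3-2(b+1)<2m$). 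Note that this count bounds the number of \emph{distinct} crossings on any repeat-free sub-arc directly, so there is no need for the covering/interleaving analysis of which single copies land in which super-blocks; I would recommend replacing your final paragraph with this count.
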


\begin{proof}
We will show that there is no way to traverse the knot and visit every crossing before visiting a crossing a second time.  By Lemma \ref{lem:flype} we know that this diagram of $K$ is the only diagram with the minimum number of crossing.  Thus we have $m(n-1)$ crossings in the diagram of $K$. 

Let $m=bn+r$, where $b\geq1$ and $1\leq r \leq n-1$.  Recall $w=w_0^m$ and label the $w_0$ in order $w_1$ through $w_m$.   We count the number of crossings we can reach before we come to a crossing the second time.  To maximize the number of crossings we meet in a single path traveling once down through the braid, we begin at strand one.  Notice that anytime we travel through a  single $w_i$ from strand one, we meet all $n-1$ crossings of that $w_i$.  Any other strand will only meet one crossing.  So when we start at strand one in $w_1$ and travel through $w_n$, we will have met $2(n-1)$ crossings.  So traveling all the way through  to $w_{bn}$ means we meet $b\cdot 2(n-1)$ crossings. Next, we will be at strand one in $w_{bn+1}$, which gives us another $n-1$ crossings.  The remaining of $w_i$'s to $w_m$ will have a single crossing each, giving us $r-1$ more crossings.  This means we have  $2b(n-1) + (n-1) + (r-1)$ total crossings by traveling once down through $w$, starting at strand one.  

We cannot obtain more crossings by following this path further because we will come back to the top of $w$ through the closure, and we have already meet all the crossings in $w_1$.  But we can travel backwards from strand one in $w_1$, where we began.  This will allow us to meet $r-1$ more crossings before we come to $w_{bn+1}$, where we were last in strand one and met every crossing.  Thus, we have met the maximum number of crossings, $$2b(n-1) + (n-1) + 2(r-1) =2m+n - 2(b+1).$$  By taking the symmetry of the braid into account, there is no other starting position which will increase the number of crossings we can meet.

Now, we claim that the maximum number of crossings in the diagram we can meet, $2m+n - 2(b+1)$, is less than the total number of crossings, $m(n-1)$, for any $n\geq 3$ and $m\geq n+1$.  When $n\geq 4$, we see that 
 $$2m+n - 2(b+1) < 2m+n < 3m \leq m(n-1),$$
  leaving us with when $n=3$.  So, assuming that $n=3$, we have $2m+n - 2(b+1) = 2m + 3 - 2(b+1)$, and since $b\geq 1$, we notice that 
$$2m + 3 - 2(b+1) < 2m = m(3-1),$$
 completing the proof.\end{proof}

This shows that the weaving knots are not perfectly straight, but does not give many clues as to what the straight number actually is.  Thus we ask the following question.

\begin{question}
What is the straight number of weaving knots?
\end{question}

We observe the following example which might shed some light on this question.

\begin{example}
The simplest example of a knot that this theorem applies to is $8_{18}$.  Let $n=3$, $m=4$, $w_0 = \s_1\s_2^{-1}$, and $w=w_0^4$ and take $K$ to be the closure $w$.  By Theorem \ref{thm:spiral} we see that this knot is not perfectly straight, so $\mathtt{str}(8_{18})>8.$  But from \cite{me}, by exhaustively checking all possible configurations straight knots of the standard table, we know that $\mathtt{str}(8_{18})=10$.  We can modify this problem slightly by letting $m=5$ and then $w=w_0^5$ to obtain the knot $10_{123}$, which we found to have  $\mathtt{str}(10_{123})=12$.  Thus, for both of these examples, we have  $\mathtt{str}(K)= c(K)+2$.
\end{example}

Therefore the bound  $\mathtt{str}(K)\geq c(K)+1$ that Theorem \ref{thm:spiral} gives us is not sharp for the smallest examples.  It seems unlikely that, in general, this theorem is sharp.  

\section{Adding full twists to diagrams}\label{sec:twists}

In this section, we describe a general method for generating families of knots that are not perfectly straight by adding twists into a diagram we know is not perfectly straight. 

\begin{definition}
A {\em twist region} of a diagram of a link K consists of maximal collections of bigon regions arranged end to end. A single crossing adjacent to no bigons is also a twist region. 
\end{definition}

For convenience, we introduce the following definition.

\begin{definition}\label{def:incfull}
We say we are modifying a diagram by {\em increasing the number of full twists} of a twist region when we add an even number of alternating crossings  to the twist region in such a way that no new crossings can be removed by Reidemeister  Type 2 moves.  
\end{definition}

For the main theorem of this section, we need the following useful lemma that we will then build upon.

\begin{lemma}\label{lem:incDia}
Given any diagram $D$, let $D'$ be the knot diagram obtained by increasing the number of full twists in any twist region of $D$.  Then $D$ in straight position if and only if $D'$ is in straight position.
\end{lemma}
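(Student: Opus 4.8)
The plan is to establish the biconditional by showing that the operation of increasing the number of full twists and its inverse both preserve straight position. Since ``increasing the number of full twists'' adds crossings, the implication that requires real work is the reverse direction: given that $D'$ (the diagram with extra twists) is in straight position, I must recover a straight-position diagram for $D$. The forward direction should be comparatively easy, so I would dispose of it first.

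For the forward direction, I would argue as follows. Suppose $D$ is in straight position, so there is a single straight strand $S$ meeting every crossing, and the rest of the knot forms the ``over/under'' arc. A twist region in $D$ appears as a sequence of crossings; by a planar isotopy I may assume the straight strand passes horizontally through this twist region. The key observation is that inserting an even number of alternating crossings into a twist region can be done locally, in a small disk, while keeping one of the two strands of the twist region lying along $S$. Concretely, in a twist region the two participating strands alternate which one is ``on top''; because the straight strand is the horizontal one, each new bigon I insert contributes two new crossings that both sit on $S$, and the complementary strand simply weaves over and under as required by the alternating condition. Thus the new diagram $D'$ still has all its crossings on a single horizontal strand, so $D'$ is in straight position. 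I would draw a local picture (mirroring the style of Figure~\ref{fig:flype}) to make this insertion explicit.

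For the reverse direction, I would prove the contrapositive-flavored statement directly: if $D'$ is in straight position, then $D$ is. The natural idea is that removing full twists is itself an instance of ``increasing the number of full twists'' run backwards, or equivalently that a straight diagram of $D'$ restricts to a straight diagram of $D$ once the inserted bigons are deleted. The subtlety, and what I expect to be the main obstacle, is that the straight strand realizing straight position for $D'$ need \emph{not} pass through the twist region in the same tidy horizontal way I arranged in the forward direction; the added crossings could in principle be visited by the straight strand in an interleaved order with crossings elsewhere in the diagram. So I cannot simply ``erase'' the inserted crossings and hope the straight strand survives. To handle this, I would analyze how the straight strand can interact with a twist region: a twist region is a chain of bigons, and the straight strand, meeting every crossing exactly as a through-arc, must traverse the consecutive crossings of the twist region in a constrained pattern. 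I would argue that the even parity of the inserted crossings guarantees the two strands entering and leaving the twist region connect up in the same combinatorial pattern before and after deletion, so that collapsing the inserted even block of crossings (a sequence of Reidemeister~II-type cancellations performed along the straight strand) yields a valid straight diagram of $D$ with the straight strand intact.

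Finally I would assemble the two directions into the stated ``if and only if.'' The cleanest route is to observe that the two operations are mutually inverse on diagrams: increasing full twists in $D$ produces $D'$, and the block of inserted crossings can always be removed to return to $D$ because the twist count changed by an even number, preserving the knot. Thus it suffices to show one operation preserves straight position in the forward direction, and then apply the same argument to the inverse operation; the biconditional follows immediately. The crux remains verifying that the straight strand can be routed through the enlarged (or reduced) twist region without disturbing its passage through the crossings outside the region, which is exactly where the even-parity hypothesis and the locality of the twist insertion do the essential work.
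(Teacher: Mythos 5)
Your proof is correct and takes essentially the same approach as the paper's: the straight strand of $D$ already passes through every twist region (every crossing lies between the two embedded sub-arcs), so the inserted crossings land on that strand, and conversely deleting the inserted crossings from a straight-position diagram of $D'$ leaves the two non-self-crossing sub-arcs intact, giving a straight-position diagram of $D$. One small caution: the removal step is not a sequence of Reidemeister~II cancellations --- the inserted crossings alternate precisely so that no such cancellation exists, and $D$ and $D'$ are diagrams of \emph{different} knots --- it is simply the deletion of crossings from the diagram, which cannot create a self-crossing of either sub-arc.
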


\begin{proof}
Assume that $D$ is in straight position.  Then there is an arc $A$ which meets every crossing without meeting itself.  Any twist region $t$ is then in $A$.  Increasing the number of full twists of $t$ will make a new diagram $D'$ with $A$ now containing the new crossings.  For the other direction, assume that we increased the number of full twists of a twist region $t$ in a diagram $D$ to obtain the new diagram $D'$.  If $D'$ is in straight position, then the same arc $A$ without the new twists will also meet every crossing in $D$.\end{proof}

Next, we prove the two pieces that make up Theorem \ref{thm:inctwist}.

\begin{proposition}\label{prop:altstr}
Let $K$ be an alternating knot that is perfectly straight.  Given any minimal diagram $D$ of $K$, let $K'$ be the knot obtained by increasing the number of full twists in any twist region of $D$.  Then $K'$ is also perfectly straight.  
\end{proposition}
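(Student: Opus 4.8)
The plan is to relate the crossing number of $K'$ to that of $K$, and then use Lemma \ref{lem:incDia} together with the definition of perfectly straight. Suppose $K$ is alternating and perfectly straight, so $\mathtt{str}(K) = c(K)$, and let $D$ be a minimal diagram of $K$. Since $D$ realizes $\mathtt{str}(K)$, there is a straight-position diagram of $K$ with $c(K)$ crossings; I would first argue that we may take $D$ itself to be this straight diagram. Because $K$ is alternating and $D$ is minimal, $D$ is a reduced alternating diagram, and the key input is that adding $2k$ alternating crossings to a twist region of an alternating diagram (as in Definition \ref{def:incfull}) produces a new alternating diagram $D'$ that is again reduced, so by the Tait conjecture (Menasco--Thistlethwaite \cite{flype}) $D'$ is minimal. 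This gives the crossing-number bookkeeping $c(K') = c(K) + 2k$, where $2k$ is the number of crossings added.

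The main steps, in order, are then: (1) Establish $c(K') = c(K) + 2k$ using that the enlarged diagram $D'$ is reduced alternating, hence minimal. (2) Starting from a straight-position diagram of $K$ with exactly $c(K)$ crossings, apply Lemma \ref{lem:incDia} to conclude that increasing the number of full twists yields a diagram $D'$ that is \emph{also} in straight position. (3) Count the crossings of this straight diagram: it has $c(K) + 2k$ crossings, which by step (1) equals $c(K')$. Therefore $\mathtt{str}(K') \leq c(K') $, and since $\mathtt{str}(K') \geq c(K')$ always holds by definition, we get $\mathtt{str}(K') = c(K')$, i.e. $K'$ is perfectly straight.

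The subtle point I would be careful about is matching up \emph{which} diagram Lemma \ref{lem:incDia} is applied to. The lemma takes a diagram in straight position and produces another in straight position, but the straight-position diagram realizing $\mathtt{str}(K)$ need not be the given minimal diagram $D$; I must increase twists in the straight diagram rather than in $D$, and then separately verify that this produces the \emph{same} knot $K'$ as increasing twists in $D$. This is where the alternating hypothesis does real work: a straight-position minimal diagram of an alternating knot is a reduced alternating diagram, and by Lemma \ref{lem:flype}-type rigidity (the Tait flyping conjecture) the twist regions of the straight diagram correspond to those of $D$ up to flypes, so increasing the full twists in corresponding regions gives the same knot.

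The step I expect to be the main obstacle is exactly this identification: ensuring that ``the twist region of $D$'' and ``the twist region of the straight-position diagram'' refer to corresponding regions producing the same resulting knot $K'$. Once that correspondence is pinned down, the crossing count and the inequality $\mathtt{str}(K') \geq c(K')$ close the argument immediately. I would handle the correspondence by invoking that all minimal alternating diagrams are flype-equivalent, and that flypes preserve twist regions, so the operation of increasing full twists is well defined on the flype-equivalence class and in particular agrees on $D$ and on the straight diagram.
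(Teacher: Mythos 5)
Your proposal is correct and follows essentially the same route as the paper: reduce to a straight-position minimal diagram via the Tait flyping conjecture, transport the twist-region enlargement across the flype equivalence, and apply Lemma \ref{lem:incDia}, with the reduced-alternating-implies-minimal fact closing the crossing count. The only difference is one of emphasis: you make the bookkeeping $c(K')=c(K)+2k$ explicit (which the paper leaves implicit), while the paper is more explicit about the correspondence you flag as the main obstacle, constructing a concrete flype sequence $\hat{S}$ that carries the enlarged twist region to the straight-position diagram.
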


\begin{proof}
Given an alternating, perfectly straight knot $K$ and any minimal diagram $D$, we see that $D$ is a reduced alternating diagram.  Either $D$ is in straight position or not.  If it is in straight position, then increasing the number of full twists does not change this by Lemma \ref{lem:incDia}.  If $D$ is not in straight position, there is some sequence $S=(f_1,f_2,\ldots, f_n)$ of flypes which will create a diagram $D_0$ which is in straight position for the knot $K$.  Note that if we need to move a single crossing $x$ through $S$, we need to move every crossing of the twist region, $t$, that it belongs to.  Also, let $S'\subset S$ be the subsequence of flypes for which the crossings of $t$ are the flypers.   

Assume there are two more crossings in $t$ and call this diagram $D'$ of $K'$.  Let $\hat{S} $ be the sequence which we produce in the following way.  For each $f_i$ in $S$, add $f_i$ to $\hat{S}$ once if $f_i \not \in S'$ and three times if $f_i \in S'$.  Then by applying the sequence $\hat{S}$ to $D'$, we will move the two new crossings into position next to the other crossings of $t$ and obtain a new diagram $D_0'$.  

We claim that this diagram $D_0'$ is in straight position.  If we were to remove two crossings from $t$, then we would have our diagram $D_0$ of $K$ which is in straight position.  Then by applying Lemma \ref{lem:incDia}, we know that $D_0'$ is in straight position.  \end{proof}

Note that there are alternating knots which are perfectly straight but have minimal diagrams which are not in straight position, hence the need for this proposition.  Thus, flyping can change whether a minimal diagram is in straight position.  The knot $7_7$ is an example of this behavior.  The diagram in Rolfsen's standard table is not in straight position but $\mathtt{str}(7_7) = 7$.

\begin{proposition}
Let $K$ be an alternating knot that is not perfectly straight.  Given any minimal diagram $D$ of $K$, let $K'$ be the knot obtained by increasing the number of full twists in any twist region of $D$.  Then $K'$ is also not perfectly straight.  
\end{proposition}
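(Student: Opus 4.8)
The plan is to prove the contrapositive: assuming $K'$ is perfectly straight, I would produce a straight‑position minimal diagram of $K$, so that $K$ is perfectly straight as well. First I would record the structural setup. Since $K$ is alternating and $D$ is minimal, $D$ is reduced alternating, and increasing the number of full twists preserves this (Definition \ref{def:incfull}), so the enlarged diagram $D'$ is a reduced alternating minimal diagram of $K'$ with $c(K')=c(K)+2k$, where $2k$ is the number of added crossings forming $k$ full twists. By the Tait flyping conjecture \cite{flype}, any two reduced alternating diagrams of a fixed alternating knot are related by flypes and planar isotopy; and since any diagram realizing the straight number of an alternating knot with $\mathtt{str}=c$ is itself minimal, hence reduced alternating, being perfectly straight is \emph{equivalent}, for an alternating knot, to the statement that some diagram flype‑equivalent to a fixed minimal diagram is in straight position. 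I would also reduce to the case $k=1$ by induction on the number of added full twists: adding one full twist to the (alternating, not perfectly straight) knot at each stage, so it suffices to treat the addition of a single full twist, exactly as in Proposition \ref{prop:altstr}.

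With this reformulation, the hypothesis that $K'$ is perfectly straight supplies a reduced alternating diagram $E$ of $K'$ in straight position, joined to $D'$ by a flype sequence $T=(f_1',\dots,f_N')$ with intermediate diagrams $D'=E_0,E_1,\dots,E_N=E$. The goal is to run the construction of Proposition \ref{prop:altstr} in reverse: there a straightening sequence for $D$ was \emph{lifted} to one for $D'$ by repeating each flype whose flyper lies in the modified twist region; here I want to \emph{delete} the one added full twist from $E$ and from every $E_i$, converting $T$ into a flype sequence $\bar T$ from $D$ to a diagram $\bar E$. Granting that this deletion is legitimate at every step, $E$ is exactly $\bar E$ with a full twist reinserted into the corresponding twist region, so Lemma \ref{lem:incDia} read from $\bar E$ to $E$ shows that $\bar E$ is in straight position; since $\bar E$ is flype‑equivalent to $D$, the knot $K$ is perfectly straight, completing the contrapositive.

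The main obstacle is justifying the deletion step, i.e.\ tracking the two added crossings coherently through $T$. The difficulty is that a flype may carry the end crossing of a twist region past a nontrivial tangle, so individual twist regions need not have flype‑invariant crossing counts, and a priori the added crossings could disperse. To control this I would use the flype‑cycle structure of Definition \ref{def:cycle} (Figure \ref{fig:cycle}): because the added crossings form a genuine full twist of the alternating pattern and all crossings of a twist region are interchangeable, I would show that each $f_i'$ either is disjoint from the added full twist—in which case it descends verbatim to a flype or planar isotopy on the reduced diagram $\bar E_i$—or has its flyper among the crossings of the enlarged twist region, in which case the full twist is carried along the relevant flype cycle and reconstituted as a block immediately after $f_i'$. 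This ``block‑preserving'' normalization is precisely the reverse of the repetition trick of Proposition \ref{prop:altstr}, and imposing it on the given sequence $T$ without disturbing the straightness of the endpoints (which Lemma \ref{lem:incDia} guarantees locally) is the technical heart of the argument. Once the normalization is in place, deleting the full twist from each $E_i$ yields legitimate reduced alternating diagrams $\bar E_0=D,\dots,\bar E_N=\bar E$ with each $\bar f_i'$ a valid flype or planar isotopy, and the conclusion follows as above.
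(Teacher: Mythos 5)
Your overall strategy is the same as the paper's: argue via the contrapositive, use the Tait flyping theorem to relate a hypothetical straight-position minimal diagram of $K'$ to $D'$, and then delete the two added crossings to produce a straight-position minimal diagram of $K$. The divergence, and the genuine gap, is in your ``block-preserving normalization.'' You propose to modify the flype sequence $T$ so that the added full twist travels as a contiguous block, precisely so that Lemma \ref{lem:incDia} applies at the end. But inserting extra flypes to reconstitute the block changes the terminal diagram of the sequence: you no longer arrive at the given straight-position diagram $E$ but at some flype-equivalent diagram $E'$ in which the three crossings sit together. Flyping does \emph{not} preserve the property of being in straight position --- the paper points out that $7_7$ has a minimal diagram not in straight position even though $\mathtt{str}(7_7)=7$ --- so you cannot conclude that $E'$ is in straight position from the fact that $E$ is. Your parenthetical that Lemma \ref{lem:incDia} ``guarantees locally'' that straightness is undisturbed is exactly the unproven step: that lemma only covers adding or removing a full twist inside a single twist region, not transporting a crossing along a flype cycle.

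The paper sidesteps this by never requiring the added crossings to stay together. It reduces to $t=x$ a single crossing with a unique flype cycle (Definition \ref{def:cycle}), observes that after any sequence of flypes the three crossings (the original $x$ plus the two new ones) occupy one, two, or three positions along that cycle, and in each case deletes the two extra crossings from the allegedly straight diagram $E$ directly --- un-flipping the affected tangles in the dispersed case --- to obtain a reduced alternating diagram of $K$. The point that makes this work is more elementary and more general than Lemma \ref{lem:incDia}: an arc that meets every crossing of $E$ before meeting itself still meets every crossing of the diagram obtained by deleting two crossings, wherever those two crossings sit. To repair your argument you should drop the block requirement and instead prove this deletion statement for the dispersed configurations, which is exactly the content of cases (2) and (3) in the paper's proof.
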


\begin{proof}
Let $t$ be the twist region and $x$ a crossing on one end of $t$.  Then, by Definition \ref{def:flype}, $x$ is either a flyper of some flype or not -- we may assume the flype is not the rest of $t$.  First, assume $x$ is not a flyper for any flype, and we increase the number of full twists in $t$.  Then for each diagram $D'$ of $K'$, there is a corresponding diagram $D$ for $K$ that only differs by exactly the new full twists, which we know is not in straight position.  Then by applying Lemma \ref{lem:incDia} to each of these diagrams we know that $K'$ is still not perfectly straight.  

Next we assume that $x$ is a flyper for some flype and thus has a unique flype cycle, see Definition \ref{def:cycle}.  We may assume that $t$ is the single crossing $x$, as we can make the rest of $t$ a tangle new $F_i$.  There are three cases for what can happen when we increase the number of full twists of $t=x$ by one and then consider all flypes: 
\begin{enumerate}
\item all three crossings are in the same position between $F_i$ and $F_{i+1}$,
\item two crossings are in one position and the third crossing is in another position, or,
\item all three crossings are in different positions.
\end{enumerate}

If we are in case (1), then by Lemma \ref{lem:incDia} we know that this diagram is not in straight position as this is just increasing the number of full twists and making no flypes.  

For case (2), we have flyped a single crossing to another position.  This is equivalent to adding in two crossings to a position where $x$ is not.  Assume to the contrary that this diagram is in straight position.  Then the arc which meets every crossing would have been able to also meet every crossing in $D$, the diagram with these two crossings removed.  

Finally, for case (3), we have a similar situation as case (2).  Relabel the diagram so that we have one crossing to the left of the tangle $F_1$, just as in Figure \ref{fig:cycle}.  Then the other two crossings are at positions before tangle $F_i$ and after some $F_{i+j}$, flipping all the tangles from $i$ to $i+j$ upside-down.  Again, if we assume to the contrary that this diagram is in straight position, the diagram without these two extra crossings and all the tangles right-side up would be in straight position, contradicting our assumption and finishing the proof. \end{proof}

Combining these two propositions give us the following theorem. 

\begin{theorem}\label{thm:inctwist}
Let $K$ be an alternating knot. Given any minimal diagram $D$ of $K$, let $K'$ be the knot obtained by increasing the number of full twists in any twist region of $D$.  Then $K$ is perfectly straight if and only if $K'$ is perfectly straight.
\end{theorem}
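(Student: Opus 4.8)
The plan is to read off the biconditional directly from the two preceding propositions, which between them exhaust the two cases according to whether $K$ is or is not perfectly straight. Since the only hypothesis is that $K$ is alternating, I would first record that the operation itself keeps us inside the class to which the propositions apply: increasing the number of full twists adds an even number of \emph{alternating} crossings in a way that admits no Reidemeister Type 2 reduction, so for any minimal (hence reduced alternating) diagram $D$ of $K$ the resulting diagram $D'$ is again reduced alternating and $K'$ is again alternating. It also suffices to treat the addition of a single full twist (two crossings) and iterate, so I may assume the propositions apply verbatim to $K$, $D$, and $K'$.

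With this in place the argument splits along the equivalence. For the forward implication, if $K$ is perfectly straight then Proposition \ref{prop:altstr} converts any minimal diagram $D$ into a minimal diagram $D_0'$ of $K'$ that is in straight position, so $K'$ is perfectly straight. For the reverse implication I would argue by contraposition: assuming $K$ is \emph{not} perfectly straight, the second proposition shows that every diagram of $K'$ produced by the twisting operation fails to be in straight position, so $K'$ is not perfectly straight; equivalently, $K'$ perfectly straight forces $K$ perfectly straight. Combining the two implications gives the stated equivalence.

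The mechanism driving both propositions is Lemma \ref{lem:incDia}, which says that straight position is insensitive to the number of full twists in a region, coupled with the Tait flyping machinery: because $D$ and $D'$ are reduced alternating their crossing numbers equal their crossing counts, and adding $2k$ crossings raises $c$ by exactly $2k$, so the real quantity to control is $\mathtt{str}-c$. The genuinely delicate step—and the main obstacle—is not this combining argument but the flype analysis hidden inside the second proposition: when the end crossing of the twist region is a flyper, one must rule out that any of the three redistributions of the three crossings along its flype cycle could produce a straight diagram. I would handle this exactly as indicated there, by showing that a straight position for the twisted diagram would descend, upon deleting the two added crossings and undoing the corresponding flypes, to a straight position for $K$, contradicting that $K$ is not perfectly straight.
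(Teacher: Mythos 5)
Your proposal is correct and matches the paper, whose entire proof of Theorem~\ref{thm:inctwist} is simply to combine Proposition~\ref{prop:altstr} with the companion proposition for the not-perfectly-straight case; your forward/contrapositive split is exactly that combination. The additional remarks you make about reduction to a single full twist and the flype analysis are sound but belong to the proofs of the two propositions rather than to this combining step.
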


From this theorem, one can quickly produce entire families of knots which are perfectly straight or not perfectly straight.  Here, we generalize weaving knots. 

\begin{corollary}\label{cor:spiral}
Let $n\geq 3, m\geq n+1$ and $\gcd(n,m)=1$.  Let  $w_i = ( \s_1^{\ep_{i_1}}\s_2^{\ep_{i_2}}\cdots \s_{n-1}^{\ep_{i_{n-1}}})$ where each $\ep_{i_j}$ is an odd integers.  Then let $w = w_1w_2\cdots w_m$ and let $K$ be the closure of $w$.  If $K$ is alternating, then $K$ is not perfectly straight.  
\end{corollary}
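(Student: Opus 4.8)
The plan is to realise $K$ as the result of increasing the number of full twists (Definition \ref{def:incfull}) in the reduced alternating braid diagram of a weaving knot $W(n,m)$, and then to combine Theorem \ref{thm:spiral} with Theorem \ref{thm:inctwist}. Let $D$ be the closure of $w=w_1\cdots w_m$, so that each factor $\s_j^{\ep_{i_j}}$ is a column of $|\ep_{i_j}|$ equally signed crossings between strands $j$ and $j+1$; since $\ep_{i_j}$ is odd these columns are nonempty and, being powers of a single generator, each is a twist region of $D$. Deleting full twists from every column, that is, replacing each $\s_j^{\ep_{i_j}}$ by $\s_j^{\mathrm{sign}(\ep_{i_j})}$, yields a reduced alternating spiral diagram $D_0$ carrying one crossing per column.

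First I would show that the assumption that $K$ is alternating forces $D_0$ to be precisely a weaving knot $W(n,m)$. In a reduced alternating diagram each twist region has crossings of one sign, and because the closure is a single component the over/under alternation propagates around the whole knot through the rigid spiral permutation structure, linking together all occurrences of a given generator $\s_j$. This forces $\mathrm{sign}(\ep_{i_j})$ to be independent of the block $i$ and to alternate in $j$, i.e.\ the weaving pattern $(\pm1,\mp1,\pm1,\dots)$; for $n=3$ this is the classical fact that an alternating three-braid closure has the form $\s_1^{c_1}\s_2^{-d_1}\s_1^{c_2}\s_2^{-d_2}\cdots$ with every $c_i,d_i>0$. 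Thus $D_0$ is the standard diagram of $W(n,m)$, which by Theorem \ref{thm:spiral} is not perfectly straight.

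Reading the passage $D_0\to D$ in the other direction, each column is changed from $\s_j^{\pm1}$ to $\s_j^{\ep_{i_j}}$ of the same sign, adding $|\ep_{i_j}|-1$ equally signed crossings; as $\ep_{i_j}$ is odd this number is even and no new crossing can be undone by a Reidemeister~II move, so this is exactly an increase in the number of full twists. Because all signs agree, every intermediate diagram stays reduced and alternating, hence is a minimal diagram, so Theorem \ref{thm:inctwist} applies at each stage. Invoking it once per modified column, starting from the not perfectly straight knot $W(n,m)$, yields that $K$ is not perfectly straight.

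I expect the main obstacle to be the structural step in the second paragraph: pinning the exponent signs down to the single weaving pattern so that $D_0$ is genuinely a $W(n,m)$ to which Theorem \ref{thm:spiral} applies, rather than a ``generalised spiral'' whose blocks differ from one another. Once this sign bookkeeping is settled, the two appeals to Theorems \ref{thm:spiral} and \ref{thm:inctwist} are routine.
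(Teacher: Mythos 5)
Your proposal is correct and follows essentially the same route as the paper: reduce each column $\s_j^{\ep_{i_j}}$ to a single crossing to recognize the base case as a weaving knot $W(n,m)$, apply Theorem~\ref{thm:spiral}, and then restore the full twists via Theorem~\ref{thm:inctwist}. The paper's own proof is a two-sentence version of exactly this argument; your additional care about why the alternating hypothesis forces the weaving sign pattern and why the intermediate diagrams remain minimal is detail the paper leaves implicit.
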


\begin{proof}
If each  $\ep_{i_j}=\pm1$ and $K$ is alternating, then $K$ is a weaving knot and thus, by Theorem \ref{thm:spiral}, $K$ is not perfectly straight. If  some $\ep_{i_j}$ are odd integers and $K$ is alternating, then by applying Theorem \ref{thm:inctwist}, we still have that $K$ is not perfectly straight.\end{proof}

\begin{question}
Is a similar statement to Theorem \ref{thm:inctwist} true when $K$ is not alternating?
\end{question}

Another application of this theorem is to take any alternating knot $K$ which has $\mathtt{str}(K) = c(K)+1$ and produce from it a family of knots which is has  $\mathtt{str}(K_n) = c(K_n)+1$.  For example, consider the knot $9_{32}$ in Figure \ref{fig:9_32}.  It is alternating and $\mathtt{str}(9_{32})=10$, which we showed in \cite{me}. To go from the reduced diagram on the left to the diagram in straight position, we need to deal with the single crossing not on the straight strand.  Notice that if we push the gray, under arc up under the straight strand and the bigon we introduce two crossings on the straight strand but eliminate the single crossing not on the straight strand.  The diagram on the right of Figure \ref{fig:9_32} is now in straight position.

Theorem \ref{thm:inctwist} tells us that we can take any twist region in the alternating diagram on the left, increase the number of full twists, and get a knot which is also not perfectly straight.  But if we only use the twist regions identified in the template in Figure \ref{fig:template}, we will produce knots which are not perfectly straight, but still have the property that  $\mathtt{str}(K) = c(K)+1$.

\begin{figure}[h]
\begin{tikzpicture}[scale=.5]

\draw [ultra thick] (-1,0) arc [radius=1, start angle=180, end angle= 0];
\draw [ultra thick] (2,0) arc [radius=3.5, start angle=180, end angle= 0];
\draw [ultra thick] (3,0) arc [radius=2.5, start angle=180, end angle= 0];
\draw [ultra thick] (4,0) arc [radius=1.5, start angle=180, end angle= 0];
\draw [ultra thick] (5,0) arc [radius=.5, start angle=180, end angle= 0];

\draw [ultra thick] (0,0) arc [radius=3.5, start angle=180, end angle= 360];
\draw [ultra thick] (-1,0) arc [radius=4.5, start angle=180, end angle= 360];
\draw [ultra thick] (3,0) arc [radius=.5, start angle=180, end angle= 360];

\draw [ultra thick, gray] (2,0) arc [radius=2, start angle=180, end angle= 360];

\draw [line width = .17cm, white] (1,0) arc [radius=2, start angle=180, end angle= 360];
\draw [ultra thick] (1,0) arc [radius=2, start angle=180, end angle= 360];

\foreach \a in {.13}{
\draw [line width = .17cm, white] (2*\a,0) -- (2-2*\a,0);
\draw [line width = .17cm, white] (2+2*\a,0) -- (4-2*\a,0);
\draw [line width = .17cm, white](4+2*\a,0) -- (6-2*\a,0);
\draw [line width = .17cm, white] (6+2*\a,0) -- (8-2*\a,0);
\draw [line width = .17cm, white] (8+2*\a,0) -- (9-2*\a,0);

\draw [ultra thick] (0,0) -- (2-\a,0);
\draw [ultra thick] (2+\a,0) -- (4-\a,0);
\draw [ultra thick] (4+\a,0) -- (6-\a,0);
\draw [ultra thick] (6+\a,0) -- (8-\a,0);
\draw [ultra thick] (8+\a,0) -- (9,0);
}

\draw [fill] (9,0) circle [radius=0.04];
\draw [fill] (0,0) circle [radius=0.04];

\begin{scope}[xshift = 12cm]
\draw [ultra thick] (-1,0) arc [radius=1, start angle=180, end angle= 0];
\draw [ultra thick] (2,0) arc [radius=4.5, start angle=180, end angle= 0];
\draw [ultra thick] (3,0) arc [radius=3.5, start angle=180, end angle= 0];
\draw [ultra thick] (4,0) arc [radius=2.5, start angle=180, end angle= 0];
\draw [ultra thick, gray] (5,0) arc [radius=1.5, start angle=180, end angle= 0];
\draw [ultra thick] (6,0) arc [radius=.5, start angle=180, end angle= 0];

\draw [ultra thick] (0,0) arc [radius=4.5, start angle=180, end angle= 360];
\draw [ultra thick] (-1,0) arc [radius=5.5, start angle=180, end angle= 360];
\draw [ultra thick] (3,0) arc [radius=.5, start angle=180, end angle= 360];
\draw [ultra thick , gray] (7,0) arc [radius=.5, start angle=180, end angle= 360];

\draw [ultra thick, gray] (2,0) arc [radius=1.5, start angle=180, end angle= 360];

\draw [ultra thick] (1,0) arc [radius=2.5, start angle=180, end angle= 360];

\foreach \a in {.13}{
\draw [line width = .17cm, white] (2*\a,0) -- (2-2*\a,0);
\draw [line width = .17cm, white] (2+2*\a,0) -- (4-2*\a,0);
\draw [line width = .17cm, white](4+2*\a,0) -- (7-2*\a,0);
\draw [line width = .17cm, white] (7+2*\a,0) -- (10-2*\a,0);
\draw [line width = .17cm, white] (10+2*\a,0) -- (11-2*\a,0);

\draw [ultra thick] (0,0) -- (2-\a,0);
\draw [ultra thick] (2+\a,0) -- (4-\a,0);
\draw [ultra thick] (4+\a,0) -- (7-\a,0);
\draw [ultra thick] (7+\a,0) -- (10-\a,0);
\draw [ultra thick] (10+\a,0) -- (11,0);
}

\draw [fill] (11,0) circle [radius=0.04];
\draw [fill] (0,0) circle [radius=0.04];

\end{scope}

\end{tikzpicture}
\caption{The knot $9_{32}$. On the left is a reduced diagram that is not in straight position. On the right is diagram of $9_{32}$ in straight position with 10 crossings. }\label{fig:9_32}
\end{figure}
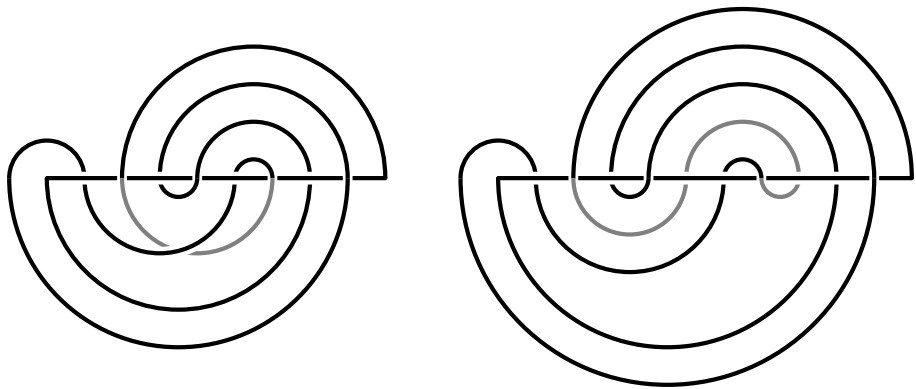

\begin{figure}[h]
\begin{tikzpicture}[scale=.42]

\draw [ultra thick] (-1,.5) arc [radius=1, start angle=180, end angle= 0];
\draw [ultra thick] (3,.5) arc [radius=5, start angle=180, end angle= 0];
\draw [ultra thick] (5,.5) arc [radius=3, start angle=180, end angle= 0];
\draw [ultra thick] (6,.5) arc [radius=1.5, start angle=180, end angle= 0];

\draw [ultra thick] (0,-.5) arc [radius=5, start angle=180, end angle= 360];
\draw [ultra thick] (-1,-.5) arc [radius=6.5, start angle=180, end angle= 360];

\draw [ultra thick, gray] (4,-.5) arc [radius=2, start angle=180, end angle= 360];

\draw [line width = .17cm, white] (2,-.5) arc [radius=2.5, start angle=180, end angle= 360];
\draw [ultra thick] (2,-.5) arc [radius=2.5, start angle=180, end angle= 360];

\draw [ultra thick] (0,0) -- (1,0);
\draw [ultra thick] (2,0) -- (3,0);
\draw [ultra thick] (4,0) -- (5,0);
\draw [ultra thick] (6,0) -- (7,0);
\draw [ultra thick] (8,0) -- (9,0);
\draw [ultra thick] (10,0) -- (11,0);
\draw [ultra thick] (12,0) -- (13,0);

\draw [ultra thick] (-1,.5) -- (-1,-.5);
\draw [ultra thick] (0,0) -- (0,-.5);
\draw [ultra thick] (13,.5) -- (13,0);

\draw [fill=white, thick] (1,-.5) rectangle (2,.5);
\draw [fill=white, thick] (3,-.5) rectangle (4,.5);
\draw [fill=white, thick] (5,-.5) rectangle (6,.5);
\draw [fill=white, thick] (7,-.5) rectangle (8,.5);
\draw [fill=white, thick] (9,-.5) rectangle (10,.5);
\draw [fill=white, thick] (11,-.5) rectangle (12,.5);

\node at (1.5,0) {$t_1$};
\node at (3.5,0) {$t_2$};
\node at (5.5,0) {$t_3$};
\node at (7.5,0) {$t_4$};
\node at (9.5,0) {$t_5$};
\node at (11.5,0) {$t_6$};

\draw [fill] (13,0) circle [radius=0.04];
\draw [fill] (0,0) circle [radius=0.04];

\end{tikzpicture}
\caption{The template for Theorem \ref{thm:template}.  It is made by increasing the number of full twists of a diagram of $9_{32}$. }\label{fig:template}
\end{figure}
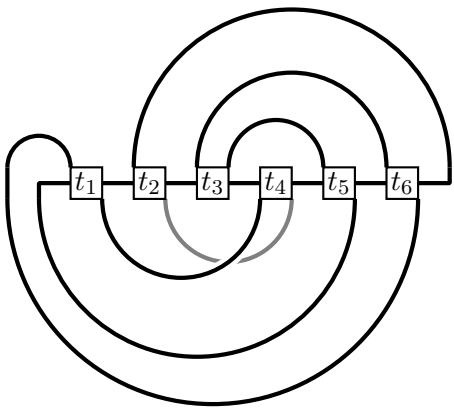

This template in Figure \ref{fig:template} has seven twist regions, one of which is a single crossing not on the straight strand, which we will not modify.  The twist regions $t_1,t_2,t_5,$ and $t_6$ need to have an odd number of crossings and $t_3$ and $t_4$ need to have an even number of crossings.  Then, make the knot alternating according to the one crossing defined in the template.

\begin{theorem}\label{thm:template}
Let $t=(t_1t_2,\ldots,t_6)$ be positive integers such that $t_1,t_2,t_5,$ and $t_6$  are odd and $t_3$ and $t_4$ are even and let $s$ be the sum of the $t_i$'s. Let $K_t$ be the alternating knot obtained from the template in Figure \ref{fig:template} with $t_i$ crossings in the corresponding twist region.  Then $\mathtt{str}(K_t) = c(K_t)+1 =  s+2$.
\end{theorem}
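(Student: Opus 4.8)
The plan is to prove the two equalities $c(K_t)=s+1$ and $\mathtt{str}(K_t)=s+2$ separately, the second being split into a matching lower and upper bound; the base case throughout is $9_{32}$, obtained from the template at its minimal admissible region counts $(t_1,\dots,t_6)=(1,1,2,2,1,1)$, which sum to $s=8$ and give the $9=s+1$ crossings of $9_{32}$.

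\emph{Crossing number.} The template of Figure \ref{fig:template} with $t_i$ crossings in region $i$, together with the single crossing off the straight strand, is an alternating diagram with exactly $s+1$ crossings. It is obtained from the reduced prime alternating diagram of $9_{32}$ by increasing the number of full twists, an operation which by Definition \ref{def:incfull} introduces no crossings removable by Reidemeister Type 2 moves and creates no nugatory crossings; hence the template diagram is reduced and alternating. By the Tait conjecture (Kauffman, Murasugi, Thistlethwaite), a reduced alternating diagram realizes the minimal crossing number, so $c(K_t)=s+1$. The parity hypotheses are exactly what makes every admissible $t$ reachable from the base $(1,1,2,2,1,1)$ by adding full twists (which add two crossings at a time, preserving parity) and guarantee that $K_t$ remains a single-component knot.

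\emph{Lower bound.} The knot $9_{32}$ is alternating and, as recorded above, satisfies $\mathtt{str}(9_{32})=10>9=c(9_{32})$, so it is not perfectly straight. Since $K_t$ is produced from $9_{32}$ by increasing the number of full twists in the regions $t_1,\dots,t_6$, Theorem \ref{thm:inctwist} shows $K_t$ is not perfectly straight either. Therefore $\mathtt{str}(K_t)>c(K_t)=s+1$, i.e. $\mathtt{str}(K_t)\geq s+2$.

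\emph{Upper bound.} I would start from the straight-position diagram $D_0$ of $9_{32}$ on the right of Figure \ref{fig:9_32}, which has $10$ crossings, all on the straight strand. The push-up isotopy carrying the left (reduced) diagram to $D_0$ leaves the six twist regions $t_1,\dots,t_6$ untouched and only replaces the single off-strand crossing by two on-strand crossings; thus the base counts $(1,1,2,2,1,1)$ account for $8$ of the $10$ crossings of $D_0$, the remaining $2$ coming from the straightened crossing. Now increase the number of full twists in the regions $t_1,\dots,t_6$ of $D_0$ to the prescribed values. By Lemma \ref{lem:incDia} the resulting diagram $D_0'$ is still in straight position, and since those twist regions are preserved by the push-up isotopy, $D_0'$ is a diagram of $K_t$. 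Its crossing count is $10+(s-8)=s+2$, so $\mathtt{str}(K_t)\leq s+2$. Combining with the lower bound gives $\mathtt{str}(K_t)=s+2=c(K_t)+1$.

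\emph{Main obstacle.} The step requiring the most care is the bookkeeping in the upper bound: verifying that the straightening isotopy of $9_{32}$ affects only the off-strand crossing, so that increasing full twists in $t_1,\dots,t_6$ of $D_0$ yields exactly $K_t$ and adds exactly $s-8$ crossings. Once this correspondence between the twist regions of the reduced template and those of the straight diagram is established, Lemma \ref{lem:incDia} and Theorem \ref{thm:inctwist} supply the rest; the crossing-number claim is then immediate from the Tait conjecture applied to the reduced alternating template.
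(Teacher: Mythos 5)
Your proposal is correct and follows essentially the same route as the paper: the lower bound comes from Theorem \ref{thm:inctwist} applied to the base knot $9_{32}$, and the upper bound comes from the push-up isotopy of Figure \ref{fig:9_32} yielding a straight-position diagram with exactly one extra crossing. You are somewhat more careful than the paper in making explicit the Tait-conjecture step giving $c(K_t)=s+1$ and the bookkeeping that the straightening isotopy commutes with adding full twists, but these are elaborations of the same argument rather than a different one.
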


\begin{proof}
By Theorem \ref{thm:inctwist}, we know that $K_t$ is not perfectly straight for any choice of $t$.  We can put the diagram in straight position by pushing the gray arc up in the same way as in Figure \ref{fig:9_32}.  This gives us one more crossing, completing the proof.\end{proof}

It should be noted that $9_{32}$ is only special in the sense that it is the first knot to have enough crossings and $\mathtt{str}(K) = c(K)+1$.  This process can easily create an infinite number of infinite families, and we will briefly describe one such process later.

A reasonable question is what happens when we take the minimal diagram on the left in Figure \ref{fig:9_32} and increase the number of full twists of the one crossing {\em not} on the straight strand.  Inserting one full twist here turns $9_{32}$ into $11_{91}$.  By Theorem \ref{thm:template}, we know that increasing the number of full twists in any of the other twist regions preserves the fact that $\mathtt{str}(K) = c(K)+1$.  But by previous work done for \cite{me} that was not published, we know that  $\mathtt{str}(11_{91})=13 = c(11_{91}) +2$.  Hence, in general we cannot make our statement in Theorem \ref{thm:inctwist} stronger to include how straight number versus crossing number behaves.  It seems likely that this process of increasing the number of full twists only increases the difference between straight number and crossing number.  This leads us to the following conjecture.

\begin{conjecture}
Let $K$ be an alternating knot with $\mathtt{str}(K) = c(K)+n$, where  $n\geq1$, and let $K'$ be the knot obtained by increasing the number of full twists in some twist region.  Then $\mathtt{str}(K') \geq c(K')+n$.
\end{conjecture}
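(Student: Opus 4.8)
The plan is to first translate the conjecture into a cleaner monotonicity statement. Write $\delta(K)=\mathtt{str}(K)-c(K)$ for the \emph{straightness defect}, so that the conjecture asserts $\delta(K')\geq\delta(K)$. Since $K$ is alternating and increasing the number of full twists adds an even number $2k$ of alternating crossings to a twist region of a reduced alternating diagram without creating a nugatory crossing, the resulting diagram of $K'$ is again reduced alternating; by Tait's crossing-number theorem we therefore have $c(K')=c(K)+2k$. Substituting this shows that the conjecture is \emph{equivalent} to the single inequality $\mathtt{str}(K')\geq\mathtt{str}(K)+2k$. Moreover it suffices to treat the addition of one full twist ($k=1$) and then induct on the number of inserted twists, so the whole problem reduces to proving $\mathtt{str}(K')\geq\mathtt{str}(K)+2$ when $K'$ is obtained from $K$ by inserting a single full twist into a twist region.

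For this inequality I would argue contrapositively, mirroring the reverse direction of Lemma \ref{lem:incDia}. Fix a straight diagram $D'$ of $K'$ realizing $\mathtt{str}(K')$ crossings. The goal is to locate two of its crossings forming a removable full twist whose deletion yields a straight diagram of $K$: by Lemma \ref{lem:incDia} that deletion preserves straight position, producing a straight diagram of $K$ with $\mathtt{str}(K')-2$ crossings and hence $\mathtt{str}(K)\leq\mathtt{str}(K')-2$, exactly as wanted. The natural candidate for this full twist is the one cut out by the \emph{twisting circle}: the unknot $c$ encircling the two strands of the modified twist region, so that $K$ is recovered from $K'$ by undoing one $\pm1$ twist along the disk bounded by $c$. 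Thus the real content is to show that in \emph{every} straight diagram of $K'$ the twisting circle can be realized as an embedded curve meeting the diagram in a single removable full twist sitting on the straight strand.

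The step I expect to be the genuine obstacle --- and the reason this is only a conjecture --- is precisely this last point, because the diagram $D'$ realizing $\mathtt{str}(K')$ is \emph{not} minimal (we are assuming $\delta(K')\geq n\geq1$). In the perfectly-straight boundary case handled by Theorem \ref{thm:inctwist} one controls all minimal diagrams through the Tait flyping theorem, but that machinery is unavailable here: under an arbitrary straight isotopy the two inserted crossings may be scattered far apart, so the twisting circle need not appear as a full twist on the straight strand at all. It is worth stressing that the usual polynomial lower bounds cannot close the gap, since $\mathtt{str}(K')\geq c(K')$ already holds trivially (any straight diagram is a diagram) and the Kauffman--Murasugi--Thistlethwaite bound adds nothing beyond $\delta(K')\geq0$; what is needed is a bound that actually sees the straight-strand constraint.

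To get past this I would pursue one of two routes. The first is to manufacture a lower bound for $\mathtt{str}$ that is genuinely sensitive to the straight strand and provably monotone under adding twists --- for instance a quantity read off from the intersection sequence of the non-straight self-avoiding arc with the straight line (recall every crossing of a straight diagram is a transverse crossing of that arc over or under the line), engineered so that inserting one full twist forces its value up by at least two. The second, more geometric, route is a Menasco-style incompressible-surface argument in the complement of $K'\cup c$: one would show that any straight diagram can be isotoped to meet the twisting disk efficiently, forcing at least two of its crossings into the twist region, which then descend to a straight diagram of $K$ upon detwisting. Either route demands a new idea beyond the flyping arguments of the previous section, and making the ``efficiently met twisting circle'' claim precise is the crux of the matter.
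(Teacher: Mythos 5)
The statement you are addressing is stated in the paper only as a conjecture and is given no proof there, so there is no proof of the author's to compare yours against; the paper's sole evidence is the example $9_{32}\to 11_{91}$, where the defect $\mathtt{str}(K)-c(K)$ jumps from $1$ to $2$. Your preliminary reduction is correct and worth having on record: a minimal diagram of an alternating $K$ is reduced alternating, adding $2k$ alternating crossings to a twist region keeps it reduced alternating, so Tait's theorem gives $c(K')=c(K)+2k$ and the conjecture is equivalent to $\mathtt{str}(K')\geq\mathtt{str}(K)+2k$, which by induction reduces to the case of a single full twist. You have also correctly diagnosed why the machinery of Section \ref{sec:twists} does not extend: Theorem \ref{thm:inctwist} only ever needs to control \emph{minimal} diagrams, where the Tait flyping theorem applies, whereas here one must control an arbitrary straight diagram of $K'$ realizing $\mathtt{str}(K')$, which has strictly more crossings than $c(K')$ and over which flyping gives no handle.

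That said, and as you acknowledge yourself, what you have produced is a reduction plus a research program, not a proof. The crux --- showing that in \emph{every} minimal-crossing straight diagram of $K'$ the twisting circle can be isotoped to cut out a removable full twist sitting on the straight strand, so that detwisting yields a straight diagram of $K$ with two fewer crossings --- is precisely the open content of the conjecture, and neither of your proposed routes (a twist-monotone lower bound for $\mathtt{str}$ sensitive to the straight strand, or a Menasco-style incompressibility argument for the twisting disk in the complement of $K'\cup c$) is actually carried out. Indeed, without such control the contrapositive step could fail outright: the two inserted crossings may be dispersed in a minimal straight diagram of $K'$ in a way that leaves no removable full twist anywhere. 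So this cannot be accepted as a proof of the statement; it is, however, an accurate and well-organized account of exactly why the statement remains a conjecture.
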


To create an infinite number of families like the one in Theorem \ref{thm:template}, we merely modify the diagram in careful way.  We sketch one possible way to do this here.  

\begin{example}
Consider the twist region $t_3$ in Figure \ref{fig:template}.    Break $t_3$ into two regions side by side and pull the strand that connects them up and follow the region of the diagram to the section of the straight strand between $t_5$ and $t_6$. Here introduce a new twist region $t_{x_1}$ with the straight strand.  Again, make the diagram alternating.  One can check the flypes, which will be the same as in the original diagram of $9_{32}$, to be sure this is not perfectly straight.  Using this diagram, make a similar template and you have a another infinite family.  

Use this trick again and break up the new twist region $t_{x_1}$ and pull it back between the two pieces of $t_3$ creating a new twist region $t_{x_2}$.  Again, we have a new family and repeating this process will yield a new infinite family each time.  
\end{example}

\begin{question}
Can we find lower bounds on straight number, from other other invariants,  which are strictly greater than crossing number?  

\end{question}



\end{document}